\documentclass[a4paper,11pt]{amsart}
\usepackage{amsmath,amssymb,amsthm,mathtools}
\usepackage{hyperref,enumitem,xcolor}

\newtheorem{theorem}{Theorem}[section]
\newtheorem{lemma}[theorem]{Lemma}
\newtheorem{proposition}[theorem]{Proposition}

\theoremstyle{remark}
\newtheorem{remark}[theorem]{Remark}

\theoremstyle{definition}
\newtheorem{example}[theorem]{Example}

\title[Counterexample for the Daugavet index of thickness in $\ell_1$-sums]{A counterexample for the Daugavet index of thickness in $\ell_1$-sums}

\author{Rainis Haller}
\address{Institute of Mathematics and Statistics, University of Tartu, Estonia}
\email{rainis.haller@ut.ee}

\author{Andre Ostrak}
\address{Institute of Mathematics and Statistics, University of Tartu, Estonia}
\email{andre.ostrak@ut.ee}

\keywords{Daugavet property, Daugavet index of thickness, absolute sums}
\subjclass[2020]{Primary 46B20; Secondary 46B22}

\thanks{This work was supported by the Estonian Research Council grants (PRG1901) and (KOHTO32).}

\begin{document}

\begin{abstract}
We give a negative answer to a question of Haller--Lange\-mets--Lima--Nadel--Rueda Zoca asking whether, for all Banach spaces $X$ and $Y$, the Daugavet index of thickness satisfies
\[
T(X\oplus_1 Y)=\min\{T(X),T(Y)\}.
\]
We show that this equality does hold whenever one of the two summands has the Daugavet property. On the other hand, if $D$ is a Banach space with the Daugavet property and $N$ is a suitable absolute norm, then for $X=D\oplus_N D$, one has $T(X\oplus_1 X)<T(X)$.
\end{abstract}

\maketitle

\section{Introduction}
Let $X$ be a real Banach space. In \cite{MR3745574}, Rueda Zoca introduced the Daugavet index of thickness
\[
\begin{aligned}
T(X)=\inf\{r>0 :\ &\text{there are $x\in S_X$ and a non-empty relatively weakly}\\
&\text{open $W\subset B_X$ such that $W\subset B(x,r)$}\}.
\end{aligned}
\]
The index measures quantitatively how far $X$ is from having the Daugavet property. One has $0\leq T(X)\leq 2$ with $T(X)=2$ if and only if $X$ has the Daugavet property. For finite-dimensional $X$, one has $T(X)=0$. We shall use the standard geometric characterisation of the Daugavet property: if $X$ has the Daugavet property, then for every $x\in S_X$, every non-empty relatively weakly open subset $U$ of $B_X$, and every $\varepsilon>0$, there is $u\in U$ such that $\|x-u\|>2-\varepsilon$ (see \cite{MR1621757, MR1784413}).

In \cite{MR4211025}, the authors introduced the indices $T^s$ and $T^{cc}$ by replacing, in the definition of $T(X)$, relatively weakly open subsets with slices and with convex combinations of relatively weakly open subsets, respectively.
These indices satisfy
\[
0\leq T^{cc}(X)\leq T(X)\leq T^s(X)\leq 2
\]
and all three are equal to $2$ exactly in the Daugavet case. Their behaviour under $\ell_1$-sums was also studied in~\cite{MR4211025}. In particular, the estimate
\[
T(X\oplus_1 Y)\leq \min\{T(X),T(Y)\}
\]
from \cite{MR3745574} was recalled and the analogous inequality for $T^{cc}$ was proved. By contrast, for the slice version, one has the exact formula
\[
T^s(X\oplus_1 Y)= \min\{T^s(X),T^s(Y)\}.
\]
Whether the analogous equality holds for $T$ and for $T^{cc}$ was left open in \cite[Question~2.14]{MR4211025}.

The present note concerns the part of that question dealing with $T$. We show that the equality for $T$ fails even when $X=Y$. 
The construction remains within the framework of absolute sums used in \cite{MR4211025}. More precisely, let $0<s<t<1$ and define an absolute normalised norm $N$ on $\mathbb R^2$ by \[N(a,b)=\max\{ s|a|+|b|, |a|+t|b|\}.\]
For the standard facts on absolute normalised norms, see \cite{MR442682, MR1801250}.
If $D$ is a Banach space with the Daugavet property and $X=D\oplus_N D$, then 
\[
T(X)=1+t\quad\text{and}\quad T(X\oplus_1 X)=\frac{2(1-st)}{2-s-t},
\]
which gives the desired counterexample (see Theorem~\ref{thm:family-counterexample} below).

These indices have been further studied in \cite{RZ2025} and \cite{CJ2024}. In the latter paper, Choi and Jung considered the pointwise version of this quantitative theory, introducing the Daugavet constant $dc(x)$ and the super Daugavet constant  $sdc(x)$ of a point $x$.
In \cite[Proposition~2.14]{CJ2024}, they observed that 
\[
T^s(X)=\inf_{x\in S_X}dc(x)\quad\text{and}\quad T(X)=\inf_{x\in S_X}sdc(x).
\]
They also proved an $\ell_1$-sum lower estimate for the pointwise constant $dc$, which reflects the known stability of $T^s$ under $\ell_1$-sums at the pointwise level. In contrast, the example in the present note shows that the corresponding lower estimate fails for the super-Daugavet constant already at the level of the global infimum.

We consider only real Banach spaces. If $N$ is an absolute normalised norm on $\mathbb R^n$, we write 
\[
S_N^+=\{\alpha\in [0,\infty)^n : N(\alpha)=1\}.
\]

\section{A lower estimate for \texorpdfstring{$T(X\oplus_1 Y)$}{T(XO1Y)}}
For $\ell_1$-sums, the following estimate refines \cite[Proposition~2.2]{MR4211025}.
\begin{proposition}\label{prop:gen_lower_est}
    Let $X$ and $Y$ be Banach spaces with $T(X), T(Y)>0$.
    Then
    \[
    T(X\oplus_1 Y)\geq 
    \frac{2T(X)T(Y)}{2T(X)+2T(Y)-T(X)T(Y)}.
    \]
\end{proposition}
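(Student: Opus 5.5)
The plan is to fix $z=(x,y)\in S_{X\oplus_1 Y}$ and a non-empty relatively weakly open $W\subset B_{X\oplus_1 Y}$, and to produce $w\in W$ with $\|z-w\|$ at least the right-hand side minus $\varepsilon$. Write $a=\|x\|$ and $b=\|y\|=1-a$, with $x=ae$ and $y=be'$ for some $e\in S_X$, $e'\in S_Y$; if $a$ or $b$ is $0$, pick any unit vector.

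Since $T(X),T(Y)>0$, both spaces are infinite-dimensional. Hence $W$ contains a point $w_0=(u,v)$ with $\lambda:=\|u\|$, $\mu:=\|v\|$ and $\lambda+\mu>1-\varepsilon$. After shrinking, $W$ contains a basic set $\bigl((u+U)\times(v+V)\bigr)\cap B$ with $U,V$ weak neighbourhoods of $0$. Consider the relatively weakly open sets
\[
U'=\{p\in B_X:\lambda p\in u+U\}\ni u/\lambda,\qquad V'=\{q\in B_Y:\mu q\in v+V\}\ni v/\mu .
\]
Any pair $(\lambda p,\mu q)$ with $p\in U'$, $q\in V'$ lies in $W$, because its norm is at most $\lambda+\mu\le 1$.

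Fix $r<T(X)$ and $\rho<T(Y)$. By the definition of $T$, there are $p\in U'$ with $\|e-p\|\ge r$ and $q\in V'$ with $\|e'-q\|\ge\rho$. The elementary estimate
\[
\|ae-\lambda p\|\ \ge\ \max\{a,\lambda\}\,\|e-p\|-|a-\lambda|,
\]
together with the trivial bound $\|ae-\lambda p\|\ge|a-\lambda|$, gives
\[
\|z-w\|\ \ge\ \max\bigl\{\max\{a,\lambda\}r-|a-\lambda|,\ |a-\lambda|\bigr\}+\max\bigl\{\max\{b,\mu\}\rho-|b-\mu|,\ |b-\mu|\bigr\}.
\]
Here $w=(\lambda p,\mu q)$, and up to $\varepsilon$ we have $\mu\approx1-\lambda$ and $|b-\mu|\approx|a-\lambda|=:d$.

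It remains to minimise this explicit function of $a,\lambda\in[0,1]$ and to check that its minimum is $2rs/(2r+2\rho-r\rho)$; letting $r\to T(X)$, $\rho\to T(Y)$ and $\varepsilon\to 0$ then finishes the proof. By symmetry we may assume $a\ge\lambda$, so $\mu\approx b+d$. The bound then reads
\[
\max\{ar-d,\,d\}+\max\{(b+d)\rho-d,\,d\}.
\]
In the worst case both maxima are attained by the first terms and balanced against $d$. The guess is that the extremal configuration has $ar-d=d$ and $(b+d)\rho-d=d$, i.e.\ $d=ar/2$ and $d=(b+d)\rho/2$. Solving these two linear equations for $a$ and $d$ under $a+b=1$ should give total $2d=2r\rho/(2r+2\rho-r\rho)$, which is exactly the stated bound.

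This optimisation is the step I expect to be the main obstacle. One has to verify that no other region of the piecewise-linear function produces a smaller value, in particular the cases $a<\lambda$, and the boundary cases $a\in\{0,1\}$ where the point lies in one summand. If the crude triangle-inequality estimate above turns out too weak in some region, the first thing I would try is to sharpen it using the freedom to apply the definition of $T$ at the point $e$ versus at $(a/\lambda)e$, i.e.\ to rescale before perturbing.
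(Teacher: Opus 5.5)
Your set-up is the same as the paper's proof: a point of $W$ of norm (almost) one, rescaled coordinate neighbourhoods $U'$ and $V'$, the definition of $T$ applied in each summand, and the estimate $\|ae-\lambda p\|\ge\max\{|a-\lambda|,\max\{a,\lambda\}\|e-p\|-|a-\lambda|\}$, which is essentially Lemma~\ref{lem:gen_ab_geq}.

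\textbf{Main gap: the final inequality is only guessed.} Write $d=a-\lambda\ge 0$, so that $\mu\approx 1-a+d$. You need
\[
F(a,d):=\max\{ar-d,\,d\}+\max\{(1-a+d)\rho-d,\,d\}\ \ge\ \frac{2r\rho}{C},\qquad C=2r+2\rho-r\rho,
\]
for all $0\le d\le a\le 1$. Locating the point where both maxima balance and evaluating $F$ there only shows $\min F\le 2r\rho/C$. It gives no lower bound.

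The missing idea, which is exactly how the paper finishes, is to bound each maximum below by a fixed weighted average, $\max\{p,q\}\ge tp+(1-t)q$. Take $t=2\rho/C$ in the first term and $t=2r/C$ in the second; both lie in $[0,1]$ because $r,\rho\le 2$. On summing:
\begin{itemize}
\item the terms in $a$ cancel;
\item the coefficient of $d$ is $2-(4r+4\rho-2r\rho)/C=0$;
\item what remains is $F(a,d)\ge 2r\rho/C$, uniformly in $(a,d)$.
\end{itemize}
This handles the boundary cases $a\in\{0,1\}$ at once. The case $a<\lambda$ follows by interchanging the roles of $X$ and $Y$, since the bound is symmetric in $r$ and $\rho$. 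So your triangle-inequality estimate is already sufficient, and no rescaling trick is needed.

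\textbf{Smaller gap: the trivial bound needs norm-one points.} When $a\ge\lambda$ you have $\mu\ge b$ up to $\varepsilon$. The trivial bound $\|be'-\mu q\|\ge|b-\mu|$ then requires $\|q\|=1$. With only $\|q\|\le 1$ you get $\|be'-\mu q\|\ge\mu\|q\|-b$, and nothing in the definition of $T$ forces $\|q\|$ to be close to $1$.

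So you must choose $q\in V'\cap S_Y$, and likewise $p\in U'\cap S_X$ when $a<\lambda$. This is what Lemma~\ref{lem:W_S} supplies. By weak lower semicontinuity of the norm, $\{q\in V':\|e'-q\|>\rho\}$ is a non-empty relatively weakly open subset of $B_Y$. It therefore meets $S_Y$, because $Y$ is infinite-dimensional.
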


\noindent
Together with the known upper estimate, it implies in particular that
\[
T(X\oplus_1 Y)=\min\{T(X),T(Y)\}
\]
whenever one of $X$ and $Y$ has the Daugavet property. 

\begin{remark}
    It is sometimes useful to rewrite Proposition~\ref{prop:gen_lower_est} in the following form. For a Banach space $Z$ with $T(Z)>0$, put $d(Z)=\frac{2}{T(Z)}-1$. In this terminology, $d(Z)=0$ precisely when $T(Z)=2$, that is, precisely when $Z$  has the Daugavet property. Proposition~\ref{prop:gen_lower_est} says that $d(X\oplus_1 Y)\leq d(X)+d(Y)$. On the other hand, the known estimate $T(X\oplus_1 Y)\leq \min\{T(X),T(Y)\}$ is equivalent to $\max\{d(X),d(Y)\}\leq d(X\oplus_1 Y)$. Thus
    \[
    \max\{d(X),d(Y)\}\leq d(X\oplus_1 Y)\leq d(X)+d(Y).
    \]
\end{remark}

To prove the proposition, we make use of the following standard fact.
\begin{lemma}\label{lem:W_S}
    Let $X$ be an infinite-dimensional Banach space, $x\in S_X$, and $W$ a non-empty relatively weakly open subset of $B_X$. Then 
    \[
    \sup_{w\in W}\|x-w\|=\sup_{w\in W\cap S_X}\|x-w\|.
    \]
\end{lemma}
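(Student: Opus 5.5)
The inequality $\geq$ is trivial, since $W\cap S_X\subset W$. For the reverse inequality, I fix $w\in W$ and $\varepsilon>0$, and I aim to find $w'\in W\cap S_X$ with $\|x-w'\|\geq \|x-w\|-\varepsilon$. It suffices to treat $w$ with $\|w\|<1$. If $\|w\|=1$ we simply take $w'=w$.

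The standard device is to move along a line inside a weak neighbourhood. Since $W$ is relatively weakly open in $B_X$, there are functionals $f_1,\dots,f_n\in X^*$ and $\delta>0$ such that
\[
\{z\in B_X : |f_i(z-w)|<\delta,\ i=1,\dots,n\}\subset W.
\]
Because $X$ is infinite-dimensional, the subspace $\bigcap_{i=1}^n\ker f_i$ is nontrivial, so it contains some $y\neq 0$. The choice of $y$ matters, because we want the distance to $x$ not to decrease. Consider $\varphi(\lambda)=\|x-w-\lambda y\|$ for $\lambda\in\mathbb R$. This function is convex, so $\varphi(\lambda)\geq\varphi(0)$ holds for all $\lambda\geq 0$ or for all $\lambda\leq 0$. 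Replacing $y$ by $-y$ if necessary, we may assume $\varphi(\lambda)\geq \varphi(0)=\|x-w\|$ for every $\lambda\geq 0$.

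Next, the function $\lambda\mapsto\|w+\lambda y\|$ is continuous on $[0,\infty)$. It equals $\|w\|<1$ at $\lambda=0$ and tends to $\infty$ as $\lambda\to\infty$. By the intermediate value theorem there is $\lambda_0>0$ with $\|w+\lambda_0 y\|=1$. Put $w'=w+\lambda_0y$. Then $w'\in B_X$ and $f_i(w'-w)=0$ for all $i$, so $w'\in W\cap S_X$. Moreover $\|x-w'\|=\varphi(\lambda_0)\geq\|x-w\|$. Taking the supremum over $w\in W$ gives the claim, in fact without any $\varepsilon$.

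The only point requiring care is the sign choice for $y$, which the convexity observation settles. Everything else is routine.
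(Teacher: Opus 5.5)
Your proof is correct, but it takes a different route from the paper's.

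The paper fixes $w\in W$ and $\varepsilon>0$ and uses the weak lower semicontinuity of $u\mapsto\|x-u\|$. This shows that $U=\{u\in B_X:\|x-u\|>\|x-w\|-\varepsilon\}$ is relatively weakly open. It then invokes the standard fact that, in an infinite-dimensional space, every non-empty relatively weakly open subset of $B_X$ (here $W\cap U$) meets $S_X$. This gives points of $W\cap S_X$ at distance at least $\|x-w\|-\varepsilon$ from $x$.

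You instead open up the proof of that standard fact: you move from $w$ along a line $w+\lambda y$ with $y\in\bigcap_i\ker f_i$ until you hit the sphere. In place of the semicontinuity step you use the convexity of $\lambda\mapsto\|x-w-\lambda y\|$, which lets you choose a direction in which the distance to $x$ never drops below $\|x-w\|$. Your convexity claim is right: if $\varphi$ dipped below $\varphi(0)$ at some $\lambda_1>0$ and some $\lambda_2<0$, writing $0$ as a convex combination of $\lambda_1,\lambda_2$ would give $\varphi(0)<\varphi(0)$.

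Your version is self-contained and gives an exact conclusion: every $w\in W$ is dominated by some $w'\in W\cap S_X$ with $\|x-w'\|\ge\|x-w\|$, with no $\varepsilon$ needed. The paper's version is shorter because it cites two standard facts. It also applies verbatim to any weakly lower semicontinuous function in place of $\|x-\cdot\|$, whereas yours applies to any function that is convex along lines.
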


\begin{proof}
    Only the inequality ``$\leq$'' needs proof. Fix $w\in W$ and $\varepsilon>0$. 
    Since the function $X\ni u\mapsto \|x-u\|\in \mathbb{R}$ is weakly lower semicontinuous, the set
    \[
    U=\{u\in B_X\colon \|x-u\|>\|x-w\|- \varepsilon\}
    \]
    is relatively weakly open, and so $W\cap U$ is a relatively weakly open neighbourhood of $w$. Therefore, $W\cap U\cap S_X$ is non-empty, and so
    \[
    \sup_{u\in W\cap S_X}\|x-u\|\geq \|x-w\|-\varepsilon.\qedhere
    \]
\end{proof}

\begin{lemma}\label{lem:gen_ab_geq}
    Let $X$ be a Banach space and let $x,y\in S_X$. If $\|x-y\|\geq C-\varepsilon$, where $0\leq C\leq 2$ and $\varepsilon>0$, then for $a,b\in [0,1]$,
    \[
    \|ax-by\|\geq
    \max\big\{|a-b|,\max\{a,b\} C-|a-b|\big\}-\varepsilon.
    \]
\end{lemma}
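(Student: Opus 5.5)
We prove the two lower bounds separately; the maximum of the two then gives the claim. Both are triangle-inequality estimates that use only $\|x\|=\|y\|=1$ and the hypothesis $\|x-y\|\geq C-\varepsilon$. By symmetry of the statement (swap $x\leftrightarrow y$ and $a\leftrightarrow b$, using $\|ax-by\|=\|by-ax\|$), we may assume $a\geq b$, so $\max\{a,b\}=a$ and $|a-b|=a-b$.

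\emph{First bound.} The reverse triangle inequality gives
\[
\|ax-by\|\geq \big|\|ax\|-\|by\|\big|=|a-b|\geq |a-b|-\varepsilon.
\]

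\emph{Second bound.} Split $ax-by=a(x-y)+(a-b)y$. Then
\[
\|ax-by\|\geq a\|x-y\|-(a-b)\|y\|\geq a(C-\varepsilon)-(a-b)=\max\{a,b\}C-|a-b|-a\varepsilon.
\]
Since $a\leq 1$, we have $a\varepsilon\leq\varepsilon$, which yields $\|ax-by\|\geq \max\{a,b\}C-|a-b|-\varepsilon$. The case $b>a$ is identical after writing $ax-by=b(x-y)+(a-b)x$.

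Combining the two bounds gives the stated inequality. The only point requiring care is that the error term $\varepsilon$ gets multiplied by $\max\{a,b\}\leq 1$, so it does not grow. The hypotheses $0\leq C\leq 2$ and $a,b\in[0,1]$ are what make this work. There is no real obstacle beyond organizing the two cases.
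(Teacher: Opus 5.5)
Your proof is correct and matches the paper's argument: the bound $|a-b|$ comes from the reverse triangle inequality, and the bound $\max\{a,b\}\|x-y\|-|a-b|$ comes from splitting $ax-by=a(x-y)+(a-b)y$ (or its mirror image), with $\max\{a,b\}\le 1$ absorbing the error term. Only $a,b\in[0,1]$ is actually used; the hypothesis $C\le 2$ plays no role.
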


\begin{proof}
   The inequality $\|ax-by\|\geq |a-b|$ is immediate. It remains to note that
   \[
   \|ax-by\|\geq \max\{a,b\}\|x-y\|-|a-b|\geq \max\{a,b\}(C-\varepsilon)-|a-b|.
   \]
\end{proof}

\begin{proof}[Proof of Proposition~\ref{prop:gen_lower_est}]
     
     Let $Z=X\oplus_1 Y$ and let $W$ be a non-empty relatively weakly open subset of $B_Z$. Let $z=(ax,by)\in S_Z$ and $w_0=(cu_0,dv_0)\in W\cap S_Z$, where $x,u_0\in S_X,y,v_0\in S_Y$ and $a,b,c,d\in [0,1]$, $a+b=c+d=1$. 
    Choose non-empty relatively weakly open subsets $U$ of $B_X$ and $V$ of $B_Y$ such that 
    \[
    (cU)\times (dV)\subset W.
    \] 
    By the definition of $T(X)$, Lemma~\ref{lem:W_S}, and Lemma~\ref{lem:gen_ab_geq}, we may choose     
    $u\in U\cap S_X$ so that, up to an arbitrarily small error,
    \[
    \|ax-cu\|\geq \max\{|a-c|,\max\{a,c\}T(X)-|a-c|\}\eqqcolon \alpha.
    \]
    Similarly, up to an arbitrarily small error, for some $v\in V\cap S_Y$,
    \[
    \|by-dv\|\geq \max\{|b-d|,\max\{b,d\}T(Y)-|b-d|\}\eqqcolon \beta.
    \]
    Hence,
    \[
    \sup_{w\in W}\|z-w\|\geq \alpha+\beta.
    \]
    It suffices to show that $\alpha+\beta\geq 2AB/C$, where $A=T(X)$, $B=T(Y)$, and $C=2A+2B-AB$.
    
    We look at the case $a\geq c$; the case $a<c$ is analogous. Denote $\theta=a-c$ and note that $d-b=\theta$ and $a+d=1+\theta$. 
    Then
    \[
    \alpha=\max\{\theta, aA-\theta\}\geq (1-\frac{2B}{C})\theta+\frac{2B}{C}(aA-\theta)=(1-\frac{4B}{C})\theta+\frac{2AB}{C}a
    \]
    and
    \[
    \beta=\max\{\theta,dB-\theta\}\geq (1-\frac{2A}{C})\theta+\frac{2A}{C}(dB-\theta)=(1-\frac{4A}{C})\theta+\frac{2AB}{C}d.
    \]
    Therefore,
    \begin{align*}
         \alpha+\beta&\geq \big(2-\frac{4A+4B}{C}\big)\theta+\frac{2AB}{C}(a+d)\\
         &= \big(2-\frac{4A+4B-2AB}{C}\big)\theta+ \frac{2AB}{C}=\frac{2AB}{C}.\qedhere
    \end{align*}
\end{proof}

\begin{remark}
When applied to the $\ell_1$-norm, \cite[Proposition 2.2]{MR4211025} gives
\[
T(X\oplus_1Y)\ge 2\bigl(\min\{T(X),T(Y)\}-1\bigr).
\]
Proposition~\ref{prop:gen_lower_est} gives a stronger bound. In particular, our estimate gives
\[
T(X\oplus_1 X)\ge\frac{2T(X)}{4-T(X)}.
\]
\end{remark}

\section{Absolute sums of Daugavet spaces and the counterexample}

Let $N$ be an absolute normalised norm on $\mathbb R^n$ ($n\geq 2$) and set
\[
\mu(N)=\inf\{N(\alpha+\beta):\alpha,\beta\in S_N^+\}.
\]
\begin{remark}\label{rem:n=2_m(N)=N(1,1)}
If $n=2$, then $\mu(N)=N(1,1)$.
\end{remark}

\begin{proposition}\label{prop:daugavet-absolute-lower}
Let $D$ be a Daugavet space and let $X=D\oplus_N\dotsb\oplus_N D.$ 
Then
\[
T(X)\ge \mu(N).
\]
\end{proposition}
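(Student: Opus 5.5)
Fix $z=(\alpha_1x_1,\dots,\alpha_nx_n)\in S_X$ with $x_i\in S_D$ and $\alpha=(\alpha_1,\dots,\alpha_n)\in S_N^+$, and fix a non-empty relatively weakly open $W\subset B_X$. We aim to show that $\sup_{w\in W}\|z-w\|\ge \mu(N)-\varepsilon$ for every $\varepsilon>0$; this yields $T(X)\ge\mu(N)$. To begin, pick $w_0=(\beta_1u_1,\dots,\beta_nu_n)\in W\cap S_X$ with $u_i\in S_D$ and $\beta\in S_N^+$. This is possible by Lemma~\ref{lem:W_S}, since $D$, being Daugavet, is infinite-dimensional, and so is $X$. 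As in the proof of Proposition~\ref{prop:gen_lower_est}, the product structure of the weak topology lets us find non-empty relatively weakly open sets $U_i\subset B_D$ with $u_i\in U_i$ (when $\beta_i=0$, take $U_i=B_D$) such that $\beta_1U_1\times\dots\times\beta_nU_n\subset W$. For this step one uses that $\|w_0\|=1$, so coordinatewise perturbations stay in $B_X$ only after intersecting with $B_X$; it is cleaner to take a basic neighbourhood of $w_0$ in $W$ given by finitely many functionals and then, coordinatewise, $U_i=\{u\in B_D: \beta_iu\text{ satisfies the $i$-th conditions}\}$. Norm membership is automatic because $N(\beta_1\|v_1\|,\dots)\le N(\beta)=1$ by monotonicity of absolute norms.

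Next, for each $i$ with $\beta_i>0$, apply the Daugavet property in $D$: there is $v_i\in U_i$ with $\|x_i-v_i\|>2-\varepsilon$. By weak lower semicontinuity of the norm, as in Lemma~\ref{lem:W_S}, we may take $v_i\in S_D$. Then Lemma~\ref{lem:gen_ab_geq} with $C=2$ gives
\[
\|\alpha_ix_i-\beta_iv_i\|\ge \max\{\alpha_i,\beta_i\}\cdot 2-|\alpha_i-\beta_i|-2\varepsilon=\alpha_i+\beta_i-2\varepsilon .
\]
When $\beta_i=0$, the coordinate difference is simply $\alpha_i x_i$, which has norm $\alpha_i=\alpha_i+\beta_i$. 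Set $w=(\beta_1v_1,\dots,\beta_nv_n)\in W$. Monotonicity of the absolute norm $N$ then yields
\[
\|z-w\|=N\bigl(\|\alpha_ix_i-\beta_iv_i\|\bigr)_i\ge N\bigl((\alpha_i+\beta_i-2\varepsilon)^+\bigr)_i\ge N(\alpha+\beta)-2\varepsilon N(1,\dots,1),
\]
which is at least $\mu(N)-c\varepsilon$. Letting $\varepsilon\to0$ finishes the argument.

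The main obstacle is the first step: showing rigorously that $W$ contains a product $\beta_1U_1\times\dots\times\beta_nU_n$ of relatively weakly open sets around $w_0$. The weak topology of $X$ is the product of the weak topologies of the factors, since $X^*=D^*\oplus_{N^*}\dots\oplus_{N^*}D^*$ and each functional is a sum of coordinate functionals. So a basic weak neighbourhood $\{w\in B_X:|f_k(w-w_0)|<\delta\}$ contains $\{(\beta_iv_i): v_i\in B_D,\ |g_{k,i}(\beta_iv_i-\beta_iu_i)|<\delta/n\}$. These sets lie in $B_X$ by the monotonicity noted above, which handles this step.
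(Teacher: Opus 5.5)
Your proof is correct and follows essentially the same route as the paper: you pick $w_0\in W\cap S_X$, pass to a product $\beta_1U_1\times\dots\times\beta_nU_n\subset W$, use the Daugavet property together with Lemma~\ref{lem:W_S} to find $v_i\in U_i\cap S_D$ with $\|x_i-v_i\|>2-\varepsilon$, apply Lemma~\ref{lem:gen_ab_geq} with $C=2$, and conclude by monotonicity of $N$. Beyond the paper, you justify the existence of the product neighbourhood, which the paper only asserts, and you treat $\beta_i=0$ separately; your loss of $2\varepsilon$ per coordinate instead of $\varepsilon$ is harmless.
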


\begin{proof}
Let $U$ be a non-empty relatively weakly open subset of $B_X$, let $x=(\alpha_1 x_1,\dotsc,\alpha_n x_n)\in S_X$, and let $u=(\beta_1 u_1,\dotsc,\beta_n u_n)\in U\cap S_X$,
where $x_1,u_1,\dotsc,x_n,u_n\in S_D$, $\alpha\coloneqq (\alpha_1,\dotsc,\alpha_n)\in S_N^+$, and $\beta\coloneqq (\beta_1,\dotsc,\beta_n)\in S_N^+$.

Choose non-empty relatively weakly open subsets $U_i$ of $B_{D}$ such that
\[
\beta_1U_1\times\dotsb\times \beta _nU_n\subset U.
\]

Fix $\varepsilon>0$. Use the Daugavet property together with Lemma~\ref{lem:W_S} to choose $w_i\in U_i\cap S_{D}$ such that
\[
\left\|x_i-w_i\right\|>2-\varepsilon.
\]
By Lemma~\ref{lem:gen_ab_geq},
\[
\|\alpha_i x_i-\beta_i w_i\|\ge \alpha_i+\beta_i-\varepsilon.
\]

Let $w=(\beta_1w_1,\dotsc,\beta_n w_n)\in U$. By monotonicity of $N$, the coordinate estimates give
\[
\|x-w\|\ge N(\alpha+\beta)-n\varepsilon.
\]
Hence,
\[
\sup_{w\in U}\|x-w\|\geq N(\alpha+\beta)\geq \mu(N).\qedhere
\]
\end{proof}

\begin{remark}
    The same proof works for $D_1\oplus_N\dotsb\oplus_N D_n$, where $D_1,\dotsc,D_n$ are Daugavet spaces.
\end{remark}

\begin{theorem}\label{thm:family-counterexample}
Let $0<s<t<1$ and let $N$ be an absolute normalised norm on $\mathbb{R}^2$ defined by
\[N(a,b)=\max\{s|a|+|b|,\ |a|+t|b|\}.\]
Let $X=D\oplus_N D$, where $D$ is a Banach space with the Daugavet property. Then $T(X)=1+t$ and
\[T(X\oplus_1X)=\frac{2(1-st)}{2-s-t}.\]
In particular, $T(X\oplus_1X)<T(X)$.
\end{theorem}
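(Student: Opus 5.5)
The plan is to prove two matching bounds for $T(X)$ and two for $T(X\oplus_1 X)$. In each case the lower bound comes from Proposition~\ref{prop:daugavet-absolute-lower}, and the upper bound comes from an explicit point together with a slice. The final inequality $T(X\oplus_1X)<T(X)$ is then elementary algebra.

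\emph{Computing $T(X)$.} By Remark~\ref{rem:n=2_m(N)=N(1,1)} and Proposition~\ref{prop:daugavet-absolute-lower},
\[
T(X)\ge N(1,1)=\max\{1+s,1+t\}=1+t .
\]
For the reverse inequality, take $x=(x_1,0)$ with $x_1\in S_D$, so that $N(1,0)=1$. Fix $v_0\in S_D$ and $g\in S_{D^*}$ with $g(v_0)$ close to $1$, and let $W$ be the slice of $B_X$ determined by the functional $(0,g)$ at level $1-\delta$.

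Write $w=(au,bv)\in W$ with $u,v\in S_D$. The slice condition gives $b>1-\delta$. Since $sa+b\le N(a,b)\le 1$, we get $a\le \delta/s$. Hence
\[
\|x-w\|\le N(1+a,b)\le N(1+\delta/s,1)\longrightarrow N(1,1)=1+t
\quad\text{as } \delta\to 0 .
\]
This shows $T(X)\le 1+t$.

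\emph{Lower bound for $Z=X\oplus_1X$.} Identify $Z$ with $D\oplus_M D\oplus_M D\oplus_M D$, where
\[
M(a,b,c,d)=N(a,b)+N(c,d)
\]
is an absolute normalised norm on $\mathbb R^4$. By the remark following Proposition~\ref{prop:daugavet-absolute-lower}, the same argument gives $T(Z)\ge \mu(M)$. The proposition therefore reduces to the finite-dimensional identity
\[
\mu(M)=\inf\{M(\alpha+\beta):\alpha,\beta\in S_M^+\}=\frac{2(1-st)}{2-s-t}.
\]
I expect this identity to be the main obstacle; Proposition~\ref{prop:gen_lower_est} only gives the weaker bound $2(1+t)/(3-t)$, so a direct computation is needed. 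The approach is to parametrise
\[
\alpha=\lambda(\alpha',\,0)+(1-\lambda)(0,\,\alpha''),\qquad \beta=\kappa(\beta',\,0)+(1-\kappa)(0,\,\beta''),
\]
with $\alpha',\alpha'',\beta',\beta''\in S_N^+$. The unit sphere $S_N^+$ is a polygon with vertices $(0,1)$, $P=\bigl(\tfrac{1-t}{1-st},\tfrac{1-s}{1-st}\bigr)$ and $(1,0)$. Since $M$ is piecewise linear, the minimum of $M(\alpha+\beta)$ is attained with $\alpha',\alpha'',\beta',\beta''$ at these vertices and $\lambda,\kappa$ at breakpoints. The candidate minimiser uses the vertex $P$ in one block against a coordinate vertex in the other, with mixing weights chosen to balance the two linear pieces of $N$; the denominator $2-s-t$ should appear from this balancing. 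The remaining work is a careful case check over vertex combinations.

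\emph{Upper bound for $Z$.} Take the minimising $\alpha,\beta$ found above. Let $x\in S_Z$ have profile $\alpha$, and let $W$ be a thin slice of $B_Z$ given by a functional that norms a point of profile $\beta$. As in the computation of $T(X)$, the slice forces every $w\in W$ to have profile $\gamma$ close to $\beta$. This requires $\beta$ to be an exposed point of the positive part of $B_M$, which holds if the minimiser is built from the vertices of $S_N^+$. Then
\[
\|x-w\|\le M(\alpha+\gamma)\approx M(\alpha+\beta)=\mu(M).
\]
Finally, comparing $\frac{2(1-st)}{2-s-t}$ with $1+t$ reduces to
\[
2-2st<(1+t)(2-s-t)=2-s+t-st-t^2,
\]
i.e.\ to $t(1-t)<s(1-t)$ after rearranging. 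This must be checked carefully against the exact value of $\mu(M)$, since the direction of the inequality depends on getting that value right.
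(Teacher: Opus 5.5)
Your computation of $T(X)=1+t$ is correct and is essentially the paper's argument. The reduction $T(Z)\ge\mu(M)$ is also correct. There are, however, genuine gaps in the rest.

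\textbf{The upper bound $T(Z)\le\gamma$, with $\gamma=\frac{2(1-st)}{2-s-t}$, cannot be obtained from a slice at all.} Slices are relatively weakly open, so any slice estimate is an upper bound for $T^s(Z)$. By the exact $\ell_1$-sum formula for $T^s$ recalled in the introduction,
\[
T^s(Z)=T^s(X)\ge T(X)=1+t>\gamma .
\]
Concretely, take $f=(f_1,f_2)\in X^*\oplus_\infty X^*$ with, say, $\|f_1\|=\|f\|$. Then every slice of $B_Z$ given by $f$ contains a point $(u,0)$, whose profile has a vanishing second block. So no slice can force all its points to have profile close to one with mass in both blocks. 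That is exactly the profile you need. The optimal pair is $\alpha=(a,0,0,b)$ and $\beta=(0,b,a,0)$ with $a=\frac{1-t}{2-s-t}$, $b=\frac{1-s}{2-s-t}$, giving $M(\alpha+\beta)=2N(a,b)=\gamma$. Here $\beta=b(0,1,0,0)+a(0,0,1,0)$ is not even an extreme point of $B_M\cap[0,\infty)^4$. So your claim that $\beta$ is exposed ``if built from vertices of $S_N^+$'' is false.

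The missing idea is to use a genuine finite intersection of weakly open sets, as the paper does. Take $z=((ax,0),(0,bx))$ and
\[
W=\{((u_1,u_2),(u_3,u_4))\in B_Z:\ \|u_2\|>b(1-\varepsilon),\ \|u_3\|>a(1-\varepsilon)\}.
\]
$W$ is relatively weakly open by weak lower semicontinuity of the norm. The constraint $N(\|u_1\|,\|u_2\|)+N(\|u_3\|,\|u_4\|)\le 1$ then forces $\|u_1\|\le\varepsilon/s$, $\|u_4\|\le\varepsilon/t$, $\|u_2\|\le b+a\varepsilon$ and $\|u_3\|\le a+b\varepsilon$. Hence $\sup_{w\in W}\|z-w\|\to 2N(a,b)=\gamma$. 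This is precisely where $T$ and $T^s$ part ways, which is the whole point of the theorem.

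\textbf{The lower bound $\mu(M)\ge\gamma$ is left unproved.} The reduction ``the minimum is at vertices and breakpoints'' is unjustified, since you are minimising a convex function over the non-convex set $S_M^+\times S_M^+$. Your candidate minimiser is also wrong. Suppose $P$ carries weight $\lambda>0$ in $\alpha$. Both linear pieces of $N$ equal $1$ at $P$, so $N(\lambda P+\kappa\beta')=\lambda+\kappa$ for every $\beta'\in S_N^+$. This yields
\[
M(\alpha+\beta)\ge 2\delta+(1-\delta)(\lambda+\kappa)>\gamma,\qquad \delta=\frac{1-st}{2-s-t}<1 .
\]
By symmetry, any configuration using $P$ with positive weight is strictly worse than $\gamma$.

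No case analysis is needed. For $p,q\ge 0$ one has $N(p,q)\ge\delta(p+q)$. Also $\alpha_1+\dots+\alpha_4\ge M(\alpha)=1$, because $N\le\|\cdot\|_1$. Therefore
\[
M(\alpha+\beta)\ge\delta\bigl(\|\alpha\|_1+\|\beta\|_1\bigr)\ge 2\delta=\gamma .
\]

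\textbf{Your final rearrangement has the wrong sign.} The inequality $2-2st<(1+t)(2-s-t)$ is equivalent to $0<(t-s)(1-t)$, that is, $s(1-t)<t(1-t)$. This holds since $s<t<1$. The value of $\mu(M)$ was never the problem.
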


\begin{proof}
    We start by proving $T(X)=1+t$.
    The lower estimate follows from Remark~\ref{rem:n=2_m(N)=N(1,1)} and Proposition~\ref{prop:daugavet-absolute-lower}, since $N(1,1)=1+t$.

For the reverse inequality, let $x\in S_D$ and let $\varepsilon>0$. Then $(x,0)\in S_{X}$.
We define the relatively weakly open subset of $B_X$,
\[
W=\{(u,v)\in B_{X}: \|v\|>1-\varepsilon\}.
\]
Fix $(u,v)\in W$. Since $\|v\|> 1-\varepsilon$ and $s\|u\|+\|v\|\le 1$, we get $\|u\|<\varepsilon/s$.
Thus,
\[
\|(x,0)-(u,v)\|
=N(\|x-u\|,\|v\|)
\le N(1+\varepsilon/s,1).
\]
Letting $\varepsilon\to 0$ gives $T(X)\le N(1,1)=1+t$.

We now prove $T(X\oplus_1X)=\frac{2(1-st)}{2-s-t}$. Denote $\gamma=\frac{2(1-st)}{2-s-t}$ and put
\[
\delta=\min\{N(a,b):a,b\ge0,\ a+b=1\}.
\]
If $a,b\geq 0$ and $a+b=1$, then $N(a,b)=\max\{sa+1-a, a+t(1-a)\}$. The minimum thus is obtained when $sa+1-a=a+t(1-a)$, i.e. at $a=\frac{1-t}{2-s-t}$ and $b=\frac{1-s}{2-s-t}$, which gives that $\delta=(1-st)/(2-s-t)=\gamma/2$.

Let $M$ be the absolute norm on $\mathbb R^4$ defined by
\[
M(a,b,c,d)
=N(a,b)+N(c,d).
\]
Then $X\oplus_1X$ is the $M$-sum of four copies of $D$. Thus, by Proposition~\ref{prop:daugavet-absolute-lower}, $T(X\oplus_1 X)\geq \mu(M)$. To see that $\mu(M)\geq \gamma$, take $\alpha=(\alpha_1,\dotsc,\alpha_4),\beta=(\beta_1,\dotsc,\beta_4)\in S_{M}^+$, and note that
\[
\begin{aligned}
M(\alpha+\beta)
&=N(\alpha_1+\beta_1,\alpha_2+\beta_2)
 +N(\alpha_3+\beta_3,\alpha_4+\beta_4)\\
&\ge \delta(\alpha_1+\beta_1+\alpha_2+\beta_2)+\delta(\alpha_3+\beta_3+\alpha_4+\beta_4)\\
&=\delta(\alpha_1+\alpha_2+\alpha_3+\alpha_4)+\delta(\beta_1+\beta_2+\beta_3+\beta_4)\\
&\geq \delta+\delta=\gamma.
\end{aligned}
\]

For the inequality $T(X\oplus_1 X)\leq \gamma$, write 
$Z=X\oplus_1X$ and choose $x\in S_D$. Let $a=\frac{1-t}{2-s-t}$ and $b=\frac{1-s}{2-s-t}$.
Then 
\[
N(a,b)=\frac{1-st}{2-s-t}\quad \text{and}\quad\|((ax,0),(0,bx))\|=a+b=1.
\]

Let $\varepsilon>0$ and define
\[W=\{((u_1,u_2),(u_3,u_4))\in B_Z : \|u_2\|>b(1-\varepsilon),\ \|u_3\|>a(1-\varepsilon)\}.\]
This is a relatively weakly open subset of $B_Z$ that contains $((0,bx),(ax,0))$.
Let $((u_1,u_2),(u_3,u_4))\in W$. Then
\[ 
    \|u_2\|\ge b(1-\varepsilon)\quad\text{and}\quad \|u_3\|\ge a(1-\varepsilon).
\]
Since
\[
s\|u_1\|+\|u_2\|\leq N(\|u_1\|,\|u_2\|)\leq 1-N(\|u_3\|,\|u_4\|)\leq 1-\|u_3\|\leq b+a\varepsilon,
\]
one obtains $\|u_1\|\leq \varepsilon/s$ and $\|u_2\|\leq b+a\varepsilon$.
Thus,
\[
\|(ax,0)-(u_1,u_2)\|
\le
N(a+\varepsilon/s 
,\ b+a\varepsilon).
\]
Similarly, $\|u_3\|\leq a+b\varepsilon$ and $\|u_4\|\leq \varepsilon/t$,
and hence,
\[\|(0,bx)-(u_3,u_4)\|\le N(a+b\varepsilon, b+{\varepsilon}/{t}).
\]
Letting $\varepsilon\to 0$, we get $T(X\oplus_1X)\le2 N(a,b)=\gamma$.

The strict inequality $T(X\oplus_1X)<T(X)$ follows from
\[
1+t-\frac{2(1-st)}{2-s-t}
=\frac{(1-t)(t-s)}{2-s-t}>0.\qedhere
\]
\end{proof}

\begin{remark}
In Theorem~\ref{thm:family-counterexample},
\[T(X)-T(X\oplus_1X)=\frac{(1-t)(t-s)}{2-s-t}.\]
For fixed $t$, this quantity increases as $s\downarrow0$, and the limit is $(1-t)t/(2-t)$, whose maximum over $0<t<1$ is $3-2\sqrt2$, attained at $t=2-\sqrt2$.
\end{remark}

The following example shows that the general lower bound for $T(X\oplus_1 Y)$ in terms of the two numbers $T(X)$ and $T(Y)$ obtained in Proposition~\ref{prop:gen_lower_est} is not always sharp. 
\begin{example}\label{ex:concrete}
Let $N$ be an absolute normalised norm on $\mathbb{R}^2$ defined by
\[
N(a,b)=\max\{\tfrac1{10}|a|+|b|,\ |a|+\tfrac12|b|\}
\]
and let $X=C[0,1]\oplus_N C[0,1]$.
Then $T(X)=3/2$ and $T(X\oplus_1X)=19/14$,
whereas Proposition~\ref{prop:gen_lower_est} gives only
\[T(X\oplus_1X)\ge\frac{2T(X)}{4-T(X)}=\frac65.\]
\end{example}

\begin{remark}
Theorem~\ref{thm:family-counterexample} answers \cite[Question 2.14 (a)]{MR4211025} negatively. The same examples do not answer Question~2.14 (b), concerning $T^{cc}$. In fact, with the notation of Theorem~\ref{thm:family-counterexample}, it is possible to show that
\[
T^{cc}(X)=T^{cc}(X\oplus_1 X)=\frac{2(1-st)}{2-s-t}.
\]
Indeed, the inequality $T^{cc}(X)\geq T^{cc}(X\oplus_1 X)$ holds by~\cite{MR4211025}. Proposition~\ref{prop:daugavet-absolute-lower} has the following convex-combination version: if
\[
\mu^{cc}(N)=\inf\{N(\alpha+\beta):\alpha\in S_N^+, \beta\in \operatorname{conv}(S_N^+)\},
\]
then
\[
T^{cc}(D\oplus_N\dotsb\oplus_N D)\geq \mu^{cc}(N).
\]
Here, one uses the standard convex-combination form of the Daugavet property. For the norm $M=N\oplus_1 N$, one has $\mu^{cc}(M)=\frac{2(1-st)}{2-s-t}$, which gives a lower estimate $T^{cc}(X\oplus_1 X)\geq \frac{2(1-st)}{2-s-t}$. The estimate $T^{cc}(X)\leq \frac{2(1-st)}{2-s-t}$ follows by taking $x\in S_D$, $z=(0,x)$, and the convex combination
\[
C\coloneqq\lambda\{(u,v)\in B_X : \|u\|> 1-\varepsilon\}+(1-\lambda)\{(u,v)\in B_X : \|v\|> 1-\varepsilon\},
\]
where $\lambda=2(1-t)/(2-s-t)$.
Then every $w\in C$ satisfies
\[
\|z-w\|\leq
N(\lambda+(1-\lambda)\varepsilon/s,\,
2-\lambda+\lambda\varepsilon/t),
\]
and, letting $\varepsilon\to 0$, the right-hand side tends to
\[
N(\lambda,2-\lambda)=\frac{2(1-st)}{2-s-t}.
\]
Thus, \(T^{cc}(X)\leq 2(1-st)/(2-s-t)\), and the equality follows.
\end{remark}

\begin{remark}
The construction in Theorem~\ref{thm:family-counterexample} also gives a counterexample for the weak-star version, where $T_{w^*}$ is defined using relatively weak-star open subsets of the dual unit ball. Let $D=L_\infty[0,1]=L_1[0,1]^*$, which has the Daugavet property, and let $X=D\oplus_ND$ with 
\[N(a,b)=\max\{s|a|+|b|, |a|+t|b|\}.\]
Then $X$ is the dual of $L_1[0,1]\oplus_{N^*}L_1[0,1]$, and $X\oplus_1 X$ is also a dual space with the corresponding canonical predual. Since $T(Z)\leq T_{w^*}(Z)$ for every dual space $Z$, Theorem~\ref{thm:family-counterexample} gives
\[
T_{w^*}(X)\geq 1+t
\quad\text{and}\quad
T_{w^*}(X\oplus_1X)\geq\frac{2(1-st)}{2-s-t}.
\]
For the reverse inequalities, note that the relatively weakly open sets used in the upper estimates in the proof of Theorem~\ref{thm:family-counterexample} are, in the present dual setting, relatively weak-star open. Indeed, the coordinate projections are weak-star continuous and the norm on $L_\infty[0,1]$ is weak-star lower semicontinuous. Hence, the same upper estimates give
\[
T_{w^*}(X)\leq 1+t
\quad\text{and}\quad
T_{w^*}(X\oplus_1X)\leq\frac{2(1-st)}{2-s-t}.
\]
Therefore,
\[
T_{w^*}(X)=1+t
\quad\text{and}\quad
T_{w^*}(X\oplus_1X)=\frac{2(1-st)}{2-s-t}.
\]
Thus, the corresponding $\ell_1$-sum equality for $T_{w^*}$ fails as well.
\end{remark}

\section*{Acknowledgements}
This work was supported by the Estonian Research Council grants (PRG1901) and (KOHTO32).

\bibliographystyle{amsplain}
\bibliography{bibliography}

\end{document}